\newtheorem{theorem}{Theorem}
\newtheorem{proposition}[theorem]{Proposition}%
\newtheorem{example}{Example}%
 \newtheorem{remark}{Remark}%
 \newtheorem{lemma}{Lemma}%
 \newtheorem{problem}{Problem}%
\newtheorem{definition}[theorem]{Definition}
\newcommand{\RR}{\mathbb{R}}
\newcommand{\R}{\mathbb{R}}
\newcommand{\Tn}{\mathcal{U}_m}
\newcommand{\eTn}{\mathcal{U}_m^{\infty}}
\newcommand{\Trop}{\text{Trop}}
\DeclareMathOperator*{\argmin}{arg\,min}
\newcommand{\argmax}{\mathrm{argmax}}
\def\R{{\mathbb R}}
\title{Projected Gradient Descent Method for Tropical Principal Component Analysis over Tree Space}
\author{Ruriko Yoshida}
\date{}
\begin{document}
\maketitle
\begin{abstract}
    {In 2019, Yoshida et al. developed tropical Principal Component Analysis (PCA), that is, an analogue of the classical PCA in the setting of tropical geometry and applied it to visualize a set of gene trees over a space of phylogenetic trees which is an union of lower dimensional polyhedral cones in an Euclidean space with its dimension $m(m-1)/2$ where $m$ is the number of leaves. In this paper, we introduce a projected gradient descent method to estimate the  tropical principal polytope over the space of phylogenetic trees and we apply it to apicomplexa dataset. With computational experiment against Markov Chain Monte Carlo (MCMC) samplers, we show that our projected gradient descent has a lower sum of tropical distances between observations and their projections on an estimated best-fit tropical polytope compared with the MCMC approach proposed by Page et al.~in 2020.}
\end{abstract}

\section{Introduction}
Phylogenomics is a relatively new field that applies tools from phylogenetics to genome data.  One of the tasks in phylogenomics is to analyze {\em gene trees}, which are {\em phylogenetic trees} representing evolutionary histories of genes in the genome.  In this short paper, we focus on an unsupervised learning method to visualize how gene trees are distributed over the {\em space of phylogenetic trees}, that is, the set of all possible phylogenetic trees with a fixed set of labels for all leaves.  

A phylogenetic tree $T$ on the given set of leaves $[m]:= \{1, \ldots , m\}$ is a weighted tree which internal nodes in $T$ are unlabeled and their leaves $X$ are labeled and their branch lengths represent evolutionary clock and mutation rates.  In phylogenetics, a phylogenetic tree in the set of species $[m]$ represents their evolutionary history.  In phylogenomics, we construct a phylogenetic tree from an alignment or sequences for each gene in the given genome. A phylogenetic tree reconstruced from an alignment for a gene is called a gene tree.  Since each gene has different evolutionary history, gene trees do not have to have the same tree topology and their branch lengths might be different.  Thus, it is a challenge to analyze statistically on a set of phylogenetic trees.

When we conduct a statistical analysis on a set of phylogenetic trees, we vectorize trees as vectors in high dimensional vector space.  One way to vectorize a phylogenetic tree is to compute all pairwise distances between two distinct leaves in $[m]$.  This makes a vector in $\mathbb{R}^{\binom{m}{2}}$.   However, any vectors in $\mathbb{R}^{\binom{m}{2}}$ do not corresponding phylogenetic trees on $[m]$. In 1974, Buneman showed in \cite{Buneman} that a vector computed from all possible pairwise distance between leaves in $[m]$ has to satisfy the {\em four point conditions} and for an {\em equidistant tree}, rooted phylogenetic tree whose distance from its root to each leaf in $[m]$ has the same total edge weight (see Definition \ref{df:equidistant}), a vector has to satisfy the {\em three point condition} to realize the phylogenetic tree (Theorem \ref{thm:3pt}).

In 2006, Ardila and Klivans showed that the space of all phylogenetic trees on $[m]$ is an union of $m-2$ dimensional cones in $\mathbb{R}^{\binom{m}{2}}$ and it is not classically convex \cite{AK}.  Therefore, we cannot directly apply a classical statistical method to a set of phylogenetic trees since these methods assume an Euclidean sample space.  

However, Ardila and Klivans also showed that the space of {\em equidistant trees}, rooted phylogenetic trees on $[m]$ defined in Definition \ref{df:equidistant}, is a tropical Grasmaniann so that the space of equidistant trees on $[m]$ is {\em tropically convex} and forms a {\em tropical linear space} with the {\em max-plus algebra} over the {\em tropical projective space}.  Therefore, we can use {\em tropical linear algebra} to conduct statistical analysis on the space of equidistant trees on $[m]$ \cite{tropicalData}. 

In 2019, Yoshida et al.~introduced a {\em tropical principal component analysis} (PCA), which is an analogue of a classical PCA in the view of tropical geometry, to visualize how gene trees are distributed over the space of equidistant trees on $[m]$ using the max-plus algebra \cite{YZZ}.  For $s \leq \binom{m}{2}$, Yoshida et al.~introduced the $(s-1)$-th order {\em tropcial principal polytope} or the {\em best-fit tropical polytope with $s$ vertices}, whose vertices are analogue of the classical first $s$th principal components, and showed that computing a set of vertices of the tropical principal polytope can be formulated as a mixed integer linear programming problem shown in Problem \ref{optimization} \cite{YZZ}. Then Page et al.~later developed a Markov Chain Monte Carlo (MCMC) method to estimate vertices of the tropical principal polytope from a set of gene trees.  

In this short paper, motivated by the recent work of {\em tropical gradient descent} defined in \cite{talbut2024tropicalgradientdescent}, we introduce a projected gradient descent method to compute the set of vertices of the tropical principal polytope from a set of gene trees.  We compute subgradients for finding the optimal solution for the mixed integer programming problem to compute the tropical principal polytope shown in Theorem \ref{tm:subgradient}.  Then we apply our novel method to apicomplexa data from \cite{kuo} and our experiment against the {\tt R} package {\tt TML} \cite{TML} shows that our method has better performance in terms of computational time and cost function.  

This paper is organized as follows:  In Section \ref{BASICS}, we set up basics from tropical geometry.  In Section \ref{sec:treeSpace}, we remind readers notion of metrics and ultrametrics.  Then we discuss that the space of equidistant trees on $[m]$ is isometric to the space of ultrametrics on the finite set $[m]$ using the results by Buneman \cite{Buneman}.  In Section \ref{sec:tropPCA}, we discuss on tropical PCA and the $s$-th order tropical principal polytope for $s \leq e$ where $e:=\binom{m}{2}$.  Section \ref{sec:experiment} shows experimental results on apicomplexa dataset from \cite{kuo}. 

\section{Tropical Basics}\label{BASICS}

In this section, we set up readers with some basics from tropical geometry for our main results. Let ${\bf 1} = (1, \ldots , 1) \in \mathbb{R}^e$. Then, through this paper, we consider the {\em tropical projective torus}, $\R^e/\R\mathbf{1}$ which is isomorphic to $\R^{e-1}$. This means that $\R^e/\R\mathbf{1}$ is equivalent to a hyperplane in $\R^e$. This equivalence implies that for a point $x:=(x_1, \ldots , x_e)\in \mathbb R^e \!/\mathbb R {\bf 1}$, 
\begin{equation*}\label{eq:trop_torus}
    (x_1, \ldots , x_e) = (x_1+c, \ldots , x_e+c)
\end{equation*}
where $c\in\R$. 
See~\cite{MS} and~\cite{joswigBook} for more details.

Throughout this paper, we consider tropically convex sets defined by the max-plus algebra shown in Definition~\ref{def:arith}.  

\begin{definition}[Tropical Arithmetic Operations]\label{def:arith}
    The tropical semiring $(\,\mathbb{R} \cup \{-\infty\},\oplus,\odot)\,$ is defined with the following tropical addition $\oplus$ and multiplication $\odot$:
    $$a \oplus b := \max\{a, b\}, ~~~~ a \odot b := a + b $$
    for any $a, b \in \mathbb{R}\cup\{-\infty\}$. 
\end{definition}
\begin{remark}
    $-\infty$ is the identity element under addition $\oplus$ and $0$ is the identity element under multiplication $\odot$ over $(\,\mathbb{R} \cup \{-\infty\},\oplus,\odot)$.
\end{remark}
\begin{definition}[Tropical Scalar Multiplication and Vector Addition]
    For any $a,b \in \mathbb{R}\cup \{-\infty\}$ and for any $v = (v_1, \ldots ,v_e),\; w= (w_1, \ldots , w_e) \in (\mathbb{R}\cup\{-\infty\})^e$, tropical scalar multiplication and tropical vector addition are defined as:
    $$a \odot v \oplus b \odot w := (\max\{a+v_1,b+w_1\}, \ldots, \max\{a+v_e,b+w_e\}).$$
\end{definition}

\begin{definition}[Generalized Hilbert Projective Metric]
    \label{eq:tropmetric} 
    For any points $v:=(v_1, \ldots , v_d), \, w := (w_1, \ldots , w_e) \in \mathbb R^e \!/\mathbb R {\bf 1}$,  the {\em tropical metric}, $d_{\rm tr}$, between $v$ and $w$ is defined as:
    \begin{equation}\label{eq:tropdist}
    d_{\rm tr}(v,w)  := \max_{i \in \{1, \ldots , e\}} \bigl\{ v_i - w_i \bigr\} - \min_{i \in \{1, \ldots , e\}} \bigl\{ v_i - w_i \bigr\}.
    \end{equation}
\end{definition}

\begin{remark}
    The tropical metric $d_{\rm tr}$ is metric over $\mathbb R^e \!/\mathbb R {\bf 1}$. 
\end{remark} 

\begin{definition}
    A subset $S\subset \RR^e$ is called \textit{tropically convex} if it contains the point $a \odot  x \oplus b \odot y$ for all $x,y
\in S$ and all $a, b \in \RR$. The \textit{tropical convex hull} or \textit{tropical polytope}, ${\rm tconv}(V)$, of a given finite subset $V \subset \RR^e$ is the smallest tropically convex set containing $V \subset \RR^e$.  In addition ${\rm tconv}(V)$ can be written as the set of all tropical linear combinations
$${\rm tconv}(V) = \{a_1 \odot v_1 \oplus a_2 \odot v_2 \oplus \cdots \oplus a_r \odot v_r : v_1,\ldots,v_r \in V \text{ and } a_1,\ldots,a_r \in \RR\}.$$

Any tropically convex subset $S$ of $\RR^e$ is closed under tropical
scalar multiplication, $\RR \odot S \subseteq S$, i.e., if $x
\in S$, then $x+ c \cdot {\bf 1} \in S \text{ for all } c \in
\RR$. Thus, the tropically convex set $S$ is identified as
its quotient in the tropical projective torus
$\RR^e/\RR {\bf 1}$.
\end{definition}

\begin{definition}[Max-tropical Hyperplane~\cite{ETC}]
    \label{def:trop_hyp} 
    A max-tropical hyperplane $H^{\max}_{\omega}$ is the set of points $x\in \mathbb R^e \!/\mathbb R {\bf 1}$ such that 
    \begin{equation}\label{eq:trop_hyp}
    \max_{i \in \{1, \ldots , e\}} \{x_i + \omega_i\}
    \end{equation} 
    is attained at least twice, where $\omega:=(\omega_1, \ldots, \omega_e)\in \mathbb R^e \!/\mathbb R {\bf 1}$.
\end{definition}

\begin{definition}[Min-tropical Hyperplane~\cite{ETC}]
    \label{def:min_trop_hyp} 
    A min-tropical hyperplane $H^{\min}_{\omega}$ is the set of points $x\in \mathbb R^d \!/\mathbb R {\bf 1}$ such that 
    \begin{equation}\label{eq:min_trop_hyp}
    \min_{i \in \{1, \ldots , e\}} \{x_i + \omega_i\}
    \end{equation} 
    is attained at least twice, where $\omega:=(\omega_1, \ldots, \omega_e)\in \mathbb R^e \!/\mathbb R {\bf 1}$.
\end{definition}

\begin{remark}
A min-tropical hyperplane $H^{\min}_{\omega}$ and a max-tropical hyperplane $H^{\max}_{\omega}$ are tropically convex over $\mathbb (R \cup \{-\infty\})^e \!/\mathbb R {\bf 1}$.    
\end{remark}

\begin{definition}[Max-tropical Sectors from Section 5.5 in \cite{joswigBook}]\label{def:sector}
    For $i\in[e]$, the $i{\rm-th}$ {\em open sector} of $H^{\max}_\omega$ is defined as
    \begin{equation}\label{eq:max_open_sector}
        S^{\max,i}_\omega := \{\mathbf{x}\in \mathbb{R}^e/\mathbb{R}\mathbf{1}\,|\, \omega_i + x_i > \omega_j +x_j, \forall j\neq i\}.
    \end{equation}
    \noindent and the $i{\rm-th}$ {\em closed sector} of $H^{\max}_\omega$ is defined as
    \begin{equation}\label{eq:max_closed_sector}
        \overline{S}^{\max,i}_\omega := \{\mathbf{x}\in \mathbb{R}^e/\mathbb{R}\mathbf{1}\,|\, \omega_i + x_i \geq \omega_j +x_j, \forall j\neq i\}.
    \end{equation}
\end{definition}

\begin{definition}[Min-tropical Sectors]\label{def:min_sector}
    For $i\in[d]$, the $i{\rm-th}$ {\em open sector} of $H^{\min}_\omega$ is defined as
    \begin{equation}\label{eq:min_open_sector}
        S^{\min,i}_\omega := \{\mathbf{x}\in \mathbb{R}^d/\mathbb{R}\mathbf{1}\,|\, \omega_i + x_i < \omega_j +x_j, \forall j\neq i\},
    \end{equation}
    \noindent and the $i{\rm-th}$ {\em closed sector} of $H^{\min}_\omega$ is defined as
    \begin{equation}\label{eq:min_closed_sector}
        \overline{S}^{\min,i}_\omega := \{\mathbf{x}\in \mathbb{R}^d/\mathbb{R}\mathbf{1}\,|\, \omega_i + x_i \leq \omega_j +x_j, \forall j\neq i\}.
    \end{equation}
\end{definition}

\section{Space of Phylogenetic Trees}\label{sec:treeSpace}

A phylogenetic tree is a rooted or unrooted tree whose exterior nodes have unique labels, whose interior nodes do not have labels, and whose edges have non-negative weights.  In this paper we focus on on equidistant tree which is a rooted phylogenetic tree such that a total weight on the path from its root to each leaf on the tree has the same total weight.  Let $[m]:= \{1, \ldots , m\}$ be the set of leaf labels on an equidistant tree $T$. 

\begin{definition}\label{df:equidistant}
    An {\em equidistant tree} $T$ on $[m]$ is a rooted phylogenetic tree on $[m]$ such that the total weight from the root to each leaf $i \in [m]$ is equal to a constant $h > 0$ for all $i \in [m]$.  $h$ is called the hight of $T$.
\end{definition}

\begin{example}\label{eg:equid}
Figure \ref{fig:equidistant} shows an equidistant tree with its height $1$ on $[4]$.
    \begin{figure}[h]
        \centering
        \includegraphics[width=\textwidth]{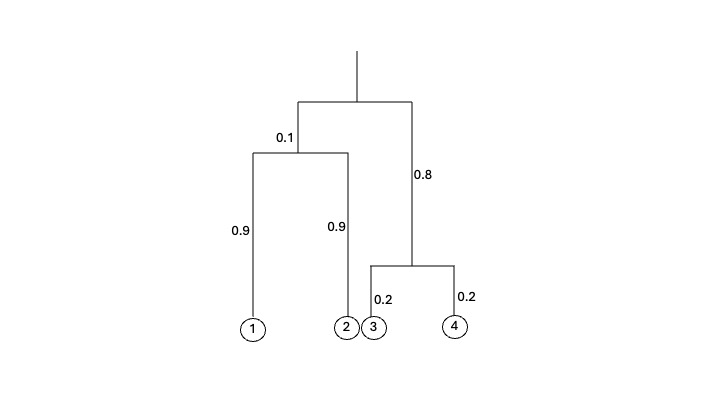}
        \vskip -0.1in
        \caption{An equidistant tree with $[4]$ from Example \ref{eg:equid}.  Its height of the tree is $1$. }
        \label{fig:equidistant}
    \end{figure}
\end{example}
Suppose $D(i, j)$ be the total weight on the unique path from a leaf $i \in [m]$ and a leaf $j \in [m]$ on a phylogenetic tree $T$. Then $D = (D(1, 2), D(1, 3), \ldots , D(m-1, m)) \in \mathbb{R}^e_{\geq 0}$, where $e := \binom{m}{2}$, is a metric, that is, $D$ satisfies 
\begin{eqnarray*}
    D(i, j) \leq D(i, k) + D(k, j)\\
    D(i, j) = D(j, i)\\
    D(i, j) = 0
\end{eqnarray*}
for all $i, j, k \in [m]$.  This metric $D$ is called a {\em tree metric} of a phylogenetic tree $T$.

If a metric $D$ satisfies 
\begin{eqnarray*}
    \max\{D(i, j), D(i, k), D(k, j)\} 
\end{eqnarray*}
achieves at least twice for distinct $i, j, k \in [m]$, then $D$ is called {\em ultrametric}.
Suppose $D(i, j)$ is the total weight of the path from $i, i \in [m]$ from an equidistant tree $T$, then we have the following theorem.

\begin{theorem}[noted in \cite{Buneman}]\label{thm:3pt}
Suppose we have an equidistant tree $T$ 
with a leaf label set $[m]$ and
$D$ as its tree metric.
Then, $D$ is an ultrametric if and only if 
$T$ is an equidistant tree.  In addition, we can reconstruct $T$ from $D$ uniquely.  
\end{theorem}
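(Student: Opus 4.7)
The plan is to prove the equivalence by a symmetry/three-leaf argument in one direction and a constructive/inductive argument in the other, and then obtain uniqueness from the fact that the construction is canonical.

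For the forward direction, I would fix any three leaves $i,j,k\in[m]$ and consider the three most recent common ancestors $\mathrm{MRCA}(i,j)$, $\mathrm{MRCA}(i,k)$, $\mathrm{MRCA}(j,k)$ in $T$. A basic tree-theoretic fact is that among these three internal nodes, two must coincide and equal the MRCA of all three leaves (the pair that first ``branches off'' produces the deeper MRCA, while the other two pairs share the higher MRCA). Since $T$ has equal root-to-leaf height $h$, the tree metric satisfies $D(x,y)=2(h-h_{xy})$, where $h_{xy}$ denotes the height of $\mathrm{MRCA}(x,y)$. Hence two of the three pairwise heights $h_{ij}, h_{ik}, h_{jk}$ are equal and are the minimum among the three, which translates into two of $D(i,j), D(i,k), D(j,k)$ being equal and attaining the maximum. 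This is precisely the three-point condition, so $D$ is an ultrametric.

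For the reverse direction, I would proceed by induction on $m$. Given an ultrametric $D$ on $[m]$, pick any pair $(i,j)$ achieving the minimum value $D(i,j)=2h_{\min}$; the three-point condition forces $D(i,k)=D(j,k)$ for every other $k\in[m]$, so leaves $i$ and $j$ form a cherry with the same distance to all remaining leaves. Collapse $\{i,j\}$ to a single representative $v$ with $D(v,k):=D(i,k)=D(j,k)$; verify the induced function on $([m]\setminus\{i,j\})\cup\{v\}$ remains an ultrametric, and apply the induction hypothesis to obtain an equidistant subtree $T'$. Attach the cherry $\{i,j\}$ under $v$ at height $h_{\min}$ to produce a rooted tree $T$ whose leaves are all at the same height, and check that the induced tree metric equals $D$. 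This shows $T$ is equidistant.

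For uniqueness, I would note that the above construction is forced at every stage: the cherry set is determined by the argmin of $D$, and the height of each internal node is determined by the corresponding minimum distance. Any other equidistant realization of $D$ would have to match the same MRCA heights and the same cherry structure, hence be the same rooted labeled tree. The main obstacle is verifying carefully that the collapsed distance function remains an ultrametric and that the reattached cherry yields exactly $D$; once the three-point condition is invoked to show $D(i,k)=D(j,k)$ for all $k$, both checks reduce to routine case analysis on triples of leaves.
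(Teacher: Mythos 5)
The paper itself offers no proof of this theorem --- it is quoted from Buneman's 1974 result --- so there is nothing to compare your argument against; note also that the statement as printed is slightly circular, and the content you (correctly) chose to prove is that a dissimilarity map on $[m]$ is an ultrametric if and only if it is the tree metric of an equidistant tree, which is then uniquely reconstructible. Your argument is the classical one and is correct in outline: the forward direction via the three pairwise MRCAs of a triple is fine, and the cherry-collapsing induction is the standard construction.

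Two places deserve tightening. First, in the reattachment step you cannot literally hang the cherry ``under $v$'' while keeping $v$ where it is: in $T'$ the leaf $v$ sits at depth $h$, whereas the node $\mathrm{MRCA}(i,j)$ must sit at depth $h-D(i,j)/2$. You must shorten the pendant edge above $v$ by $D(i,j)/2$ and hang $i,j$ below the resulting node with edges of length $D(i,j)/2$; this requires that the pendant edge of $v$ in $T'$ have length at least $D(i,j)/2$, i.e.\ that the parent of $v$ lie at depth at most $h-D(i,j)/2$. That parent is $\mathrm{MRCA}(v,k)$ for some $k$ and sits at depth $h-D(v,k)/2$, so the requirement is $D(v,k)\ge D(i,j)$, which holds precisely because $(i,j)$ minimizes $D$. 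You invoke minimality to get $D(i,k)=D(j,k)$ but never where it is actually needed for the geometry of the attachment. Second, ``the construction is forced at every stage'' is not quite a proof of uniqueness, since the minimizing pair need not be unique and different choices give different runs of the algorithm. The clean argument is that any equidistant realization of height $h$ must place $\mathrm{MRCA}(x,y)$ at depth $h-D(x,y)/2$, must have $h=\max_{x,y}D(x,y)/2$ (assuming the root has degree at least two), and for each threshold $t$ the sets of leaves descending from a common node of depth at least $h-t$ are exactly the classes of the relation $D(x,y)\le 2t$, whose transitivity is the three-point condition; these data determine the rooted weighted tree. With those two repairs the proof is complete.
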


Using Theorem \ref{thm:3pt}, we consider the {\em space of ultrametrics} on $[m]$ as the {\em space of phylogenetic trees}, which is the set of all possible equidistant trees with the leaf set $[m]$.  Let $\Tn$ be the space of ultrametrics on $[m]$.  

With tropical geometry one can show that $\Tn$ is a tropical subspace over the tropical projective space $(\mathbb{R} \cup \{-\infty\})^e / \RR {\bf 1}$.  
Let $L_m$ denote the subspace of $\mathbb R^e$ defined by the linear equations such that $x_{ij} - x_{ik} + x_{jk}=0$ for $1\leq i < j <k \leq m$. The tropicalization $\Trop(L_m)\subseteq (\mathbb{R} \cup \{-\infty\})^e/\RR {\bf 1}$ is the tropical linear space consisting of points $(u_{12},u_{13},\ldots, u_{m-1,m})$ such that $\max(u_{ij},u_{ik},u_{jk})$ achieves at least twice for distinct $i, j, k \in [m]$.

In addition, it is important to note that the tropical linear space $\Trop(L_m)$ corresponds to the graphic matroid of the complete graph $K_m$.

\begin{theorem}[Theorem 2.18 in \cite{YZZ}]
\label{ultrametrics}
The image of $\mathcal U_m$ in the tropical projective torus $\RR^e/\RR {\bf 1}$ coincides with $\Trop(L_m)$.
\end{theorem}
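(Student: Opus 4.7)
The plan is to prove the two set inclusions between $\pi(\mathcal{U}_m)$ and $\Trop(L_m)$, where $\pi:\RR^e\to\RR^e/\RR\mathbf{1}$ is the quotient map. The whole argument reduces to chasing definitions once we notice that the three-point condition is invariant under shifting in the $\mathbf{1}$-direction: for any $D\in\RR^e$ and $c\in\RR$,
\[
\max\{D_{ij}+c,\,D_{ik}+c,\,D_{jk}+c\} \;=\; \max\{D_{ij},D_{ik},D_{jk}\}+c,
\]
so the pattern of equalities among these three values is preserved by the $\RR\mathbf{1}$-action.

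For the inclusion $\pi(\mathcal{U}_m)\subseteq \Trop(L_m)$, I would take $D\in\mathcal{U}_m$. By Theorem~\ref{thm:3pt} (and the definition of ultrametric given just before it), $D$ satisfies $\max\{D(i,j),D(i,k),D(j,k)\}$ attained at least twice for every distinct triple $i,j,k\in[m]$. By the shift invariance above, the same holds for every representative of $\pi(D)$, which is exactly the defining condition of $\Trop(L_m)$.

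For the reverse inclusion $\Trop(L_m)\subseteq \pi(\mathcal{U}_m)$, given $[u]\in\Trop(L_m)$, I would construct an ultrametric lifting it. Pick $c\in\RR$ large enough that $\tilde{u}:=u+c\mathbf{1}$ has $\tilde{u}_{ij}\geq 0$ for all $i<j$, and extend symmetrically to all pairs with $\tilde{u}_{ii}:=0$. Symmetry and nonnegativity are built in by construction; the three-point condition is inherited from $[u]$ via the shift-invariance argument above; so the only thing to check is the triangle inequality $\tilde{u}_{ij}\leq \tilde{u}_{ik}+\tilde{u}_{kj}$ for any triple. By the three-point condition, the maximum of the three quantities is attained (at least) twice, so without loss of generality $\tilde{u}_{ik}=\tilde{u}_{jk}\geq \tilde{u}_{ij}$, and then $\tilde{u}_{ij}\leq \tilde{u}_{ik}\leq \tilde{u}_{ik}+\tilde{u}_{jk}$ because $\tilde{u}_{jk}\geq 0$. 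The remaining two orderings of the inequality follow identically. Hence $\tilde{u}\in\mathcal{U}_m$ with $\pi(\tilde{u})=[u]$.

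The only substantive step is the last one, verifying that the three-point condition together with nonnegativity forces the triangle inequality; everything else is direct from the definitions. This step is a standard fact about ultrametrics, but it is where the proof actually does something, since the three-point condition alone (in $\RR^e/\RR\mathbf{1}$, where nonnegativity is meaningless) is not a metric condition — one genuinely has to move to the distinguished nonnegative representative before the metric axioms become meaningful.
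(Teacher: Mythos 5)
Your proof is correct. Note that this paper does not actually prove the statement --- it is imported verbatim as Theorem 2.18 of \cite{YZZ} --- so there is no in-paper argument to compare against. With the paper's definition of $\Trop(L_m)$ as the locus where each triple-maximum $\max(u_{ij},u_{ik},u_{jk})$ is attained at least twice, the theorem does reduce to exactly the step you isolate: the forward inclusion is shift-invariance of the three-point condition, and the reverse inclusion comes down to showing that the three-point condition together with nonnegativity of a distinguished representative forces the triangle inequality. Your case analysis there is sound (in each of the three orderings the needed inequality follows from one of the other two entries being nonnegative). Two small points worth making explicit: first, the paper places $\Trop(L_m)$ inside $(\RR\cup\{-\infty\})^e/\RR{\bf 1}$, so points with $-\infty$ coordinates belong to $\Trop(L_m)$ and cannot be shifted to nonnegative representatives; since the theorem asserts equality with the image of $\mathcal{U}_m$ in the torus $\RR^e/\RR{\bf 1}$, your tacit restriction to finite coordinates is the correct reading, but it deserves a sentence. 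Second, if $\mathcal{U}_m$ is meant to consist of genuine metrics (positive off-diagonal entries), choose $c$ large enough that $\tilde{u}_{ij}>0$ strictly; this costs nothing and avoids producing only a pseudometric.
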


\subsection{Projection onto Tree Space}

In tropical geometry it is well-known that $\Tn$ is the support of a pointed simplicial fan
of dimension $m-2$ and it has $2^m - m - 2$ rays defined as {\em clade metrics} \cite{AK}.  

\begin{definition}\label{def:clade}
Suppose we have an equidistant phylogenetic tree $T$ with the leave set $[m]$.  A {\em clade} of $T$ with leaves $\sigma \subset [m]$ is an equidistant tree constructed from $T$ by adding all common ancestral interior nodes of any combinations of only leaves $\sigma$ and excluding common ancestors including any leaf from $X - \sigma$   in $T$, and all edges in $T$ connecting to these ancestral interior nodes and leaves  $\sigma$.  
\end{definition}
We note that a clade of an equidistant tree $T$ with leave set $\sigma \subset [m]$ is a subtree of $T$ with the leaves $\sigma$.  Feichtner showed that we can encode each topology of equidistant trees by a {\em nested set}, that is, a set of clades $\{\sigma_1, \ldots, \sigma_C\}$, where $C \in \{1, \ldots , m-2\}$ such that 
\[
\sigma_i \subset \sigma_j, \mbox{ or } \sigma_j \subset \sigma_i, \mbox{ or } \sigma_i \cap \sigma_j = \emptyset
\]
for all $1 \leq  i \leq j \leq C$ and $|\sigma_k| \geq 2$ for all $k = 1, \ldots , C$
\cite{Feichtner2004ComplexesOT}.  
\begin{definition}[Clade Ultrametrics]
    We consider an equidistant tree $T$ on leaves $[m]$. Let $\sigma \subset [m]$ be a proper subset of $[m]$ with at least two elements.  Let $D_{\sigma}:=(D_{\sigma}(1, 2), \ldots, D_{\sigma}(m-1, m)) \in \Tn$ such that 
    \[
    D_{\sigma}(i, j) =\begin{cases}
        0 & \mbox{if } i, j \in \sigma\\
        1 &\mbox{otherwise.}
    \end{cases}
    \]
    Then $D_{\sigma}$ is called a {\em clade ultrametric}.
\end{definition}

We note that Ardila and Klivans showed that a set of clade ultrametrics is a set of generators, i.e., rays, of pointed simplicial fan of dimension $m - 2$ in terms of a classical arithmetic over an Euclidean geometry.  We use an {\em extreme clade ultrametric}, which is an analogue of a clade ultrametric in terms of the max-plus algebra by replacing the identity element of the classical addition with the identity of the tropical addition $\oplus$ (namely, replacing $0$ with $-\infty$) and replacing the identity element of the classical multiplication with the identity of the tropical multiplication $\odot$ (namely, replacing $1$ with $0$).   

\begin{definition}[Extreme Clade Ultrametrics]\label{def:ex_clad_ultra}
    We consider an equidistant tree $T$ on leaves $[m]$. Let $\sigma \subset [m]$ be a proper subset of $[m]$ with at least two elements.  Let $D_{\sigma}:=(D_{\sigma}(1, 2), \ldots, D_{\sigma}(m-1, m)) \in \Tn$ such that 
    \[
    D_{\sigma}(i, j) =\begin{cases}
        -\infty & \mbox{if } i, j \in \sigma\\
        0 &\mbox{otherwise.}
    \end{cases}
    \]
    Then $D_{\sigma}$ is called an {\em extreme clade ultrametric}.
\end{definition}

\begin{remark}
    In polyhedral geometry, a polyhedral cone generated by a set of rays $V=\{v^1, \ldots , v^k\} \subset \RR^{e-1}$ is defined as 
    \[
    C(V) = \left\{x \in \RR^{e-1} \Bigg| x = \sum_{i=1}^k \alpha_i v^i, \, \alpha_i \geq 0 \mbox{ for all }i = 1, \ldots , k\right\}.
    \]
    We replace the classical addition with $\oplus$ and the classical multiplication with $\odot$ for $V=\{v^1, \ldots , v^k\} \subset \RR^e/\RR{\bf 1} \cong \RR^{e-1}$, then we have
    \[
    Trop(C(V)) = \left\{x \in \RR^e/\RR{\bf 1} \Bigg| x = \bigoplus_{i=1}^k \alpha_i \odot v^i, \, \alpha_i \in \RR \mbox{ for all }i = 1, \ldots , k\right\}
    \]
    which is a tropical polytope defined by $V$.
\end{remark}

\begin{proposition}[\cite{YCT}]\label{pro:generatingset}
    The set of all extreme clade ultrametrics, $\eTn$, is a generating set of $\Tn$ in terms of the max-plus algebra.
\end{proposition}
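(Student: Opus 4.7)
The plan is to show that an arbitrary ultrametric $u \in \Tn$ can be written explicitly as a max-plus combination of extreme clade ultrametrics by constructing the coefficients $\alpha_\sigma$ directly from the entries of $u$. By Theorem~\ref{thm:3pt}, $u$ corresponds to a unique equidistant tree $T$ on $[m]$, and its clades form a laminar family encoding the lca-structure.

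For each proper $\sigma \subset [m]$ with $|\sigma|\geq 2$, I would take
\[
\alpha_\sigma \;:=\; \min\{u(i,j) : i, j \in \sigma,\; i\neq j\},
\]
i.e.\ the smallest pairwise distance in $u$ restricted to pairs inside $\sigma$; by the ultrametric inequality this equals twice the height of the lowest common ancestor of $\sigma$ in $T$. The verification that $u = \bigoplus_\sigma \alpha_\sigma \odot D_\sigma$ then proceeds coordinate by coordinate. For a pair $(k,l)$ with $k \neq l$, the $(k,l)$-entry of the tropical combination reduces, using Definition~\ref{def:ex_clad_ultra}, to a maximum of those $\alpha_\sigma$'s for which $D_\sigma$ is finite at coordinate $(k,l)$.

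For the upper bound, every contributing $\sigma$ has $(k,l)$ among the pairs entering the $\min$ defining $\alpha_\sigma$, so $\alpha_\sigma \leq u(k,l)$ and hence the max is at most $u(k,l)$. For the matching lower bound, the specific witness $\sigma = \{k,l\}$ is a valid proper subset with $|\{k,l\}| = 2$ (for $m \geq 3$) and yields $\alpha_{\{k,l\}} = u(k,l)$ exactly, attaining the bound; this gives equality in the tropical projective torus modulo the constant-shift ambiguity.

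The main obstacle is bookkeeping rather than anything conceptually deep: aligning the $\{-\infty, 0\}$ pattern of the extreme clade ultrametric $D_\sigma$ (Definition~\ref{def:ex_clad_ultra}) with the family of $\sigma$'s contributing to each coordinate, so that both the upper-bound inequality and the witness identification go through for every pair $(k,l)$ simultaneously. Once this correspondence is pinned down, no further tropical machinery beyond max-plus arithmetic and the ultrametric property is needed, and the remainder is a direct combinatorial check using the laminar clade structure of $T$.
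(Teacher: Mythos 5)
The paper offers no proof of this proposition --- it is imported from \cite{YCT} --- so your attempt has to stand on its own, and as written it contains a genuine error precisely in the step you describe as ``bookkeeping rather than anything conceptually deep.'' By Definition \ref{def:ex_clad_ultra}, $D_\sigma(k,l)$ equals $0$ exactly when $k$ and $l$ do \emph{not} both lie in $\sigma$, and $-\infty$ when both do. Hence the $\sigma$'s contributing to coordinate $(k,l)$ of $\bigoplus_\sigma \alpha_\sigma\odot D_\sigma$ are those with $\{k,l\}\not\subseteq\sigma$, and for such $\sigma$ the pair $(k,l)$ is \emph{not} among the pairs entering your $\alpha_\sigma=\min\{u(i,j):i,j\in\sigma\}$, so the upper bound $\alpha_\sigma\le u(k,l)$ does not follow. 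Worse, your witness $\sigma=\{k,l\}$ contributes $\alpha_{\{k,l\}}+D_{\{k,l\}}(k,l)=-\infty$ at the one coordinate where you need it, since $k,l\in\{k,l\}$. Your verification is in effect carried out for the opposite support pattern ($0$ inside the clade, $-\infty$ outside), and in that form it proves far too much: the trivial upper bound together with the witness $\sigma=\{k,l\}$ would apply to an \emph{arbitrary} point of $\R^e/\R\mathbf{1}$, showing that everything lies in the tropical span of $\eTn$, which is false because that span is $\Tn$, a proper subset. The giveaway is that your argument never actually invokes the three-point condition.

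A repair has to take the coefficients matched to the true support, $\lambda_\sigma=\min\{u(i,j): D_\sigma(i,j)=0\}$ as in Definition \ref{defn:tropicalProjectionMap}; then the upper bound at $(k,l)$ is immediate, and all of the work sits in the lower bound: exhibiting a proper $\sigma$ with $\{k,l\}\not\subseteq\sigma$ such that \emph{every} pair in the finite support of $D_\sigma$ has $u$-value at least $u(k,l)$. The natural candidate is the ultrametric ball $\sigma=\{i: u(i,k)<u(k,l)\}\cup\{k\}$, and verifying that every pair crossing out of $\sigma$ has $u$-value at least $u(k,l)$ is exactly an application of the condition that $\max\{u(i,j),u(i,k),u(j,k)\}$ is attained twice; this works when the finite support of the generator is the cut between $\sigma$ and its complement, i.e.\ the cocircuit of the graphic matroid of $K_m$ mentioned after Theorem \ref{ultrametrics}. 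You should also be warned that with the support pattern read literally from Definition \ref{def:ex_clad_ultra} (finite on pairs entirely outside $\sigma$ as well), the decomposition can fail outright: for the ultrametric of the tree $((1,2),(3,4))$ with $u(1,2)=u(3,4)=2$ and all other entries $4$, every proper $\sigma$ fails to contain one of $\{1,2\}$, $\{3,4\}$, so every $\lambda_\sigma\le 2$ and the combination cannot reach $u(1,3)=4$ even modulo $\R\mathbf{1}$. So the coordinate correspondence you defer is not a routine check --- it is where the proposition lives, together with the three-point condition, and both are missing from your write-up.
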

Proposition \ref{pro:generatingset} is a tropical geometric analogue of the simplicial complex as a result by Ardila and Klivans in \cite{AK} by replacing a classical addition with $\oplus$ and a classical multiplication with $\odot$.  

\begin{definition}[Projection Map \cite{YCT}]
    \label{defn:tropicalProjectionMap}
    The tropical projection map to ultrametric tree space $\pi_{\mathcal{U}_m}: (\mathbb{R} \cup \{-\infty\})^e/\RR{\bf 1} \to \Tn$  is given by
    \[
        \pi_{\mathcal{U}_m}(x) = \bigoplus_{v \in \mathcal{U}_m^{\infty}} \, \lambda_v \odot v, \qquad \lambda_v = \min_{j : \, v_j = 0}\{ x_j \}
    \]
for $x := (x_1, \ldots , x_e) \in \RR^e/\RR{\bf 1}$.
\end{definition}
\begin{proposition}[\cite{YCT}]
    For all $x \in \RR^e/\RR{\bf 1}$, we have
    \[
    d_{\rm tr}(x, x') \leq d_{\rm tr}(x, y)
    \]
    for all $y \in \Tn$ and where $x' = \pi_{\mathcal{U}_m}(x)$ defined by Definition \ref{defn:tropicalProjectionMap}.
\end{proposition}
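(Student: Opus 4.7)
The plan is to establish the standard \emph{maximal-below-$x$} characterization of the tropical projection, then upgrade it to the nearest-point statement via shift-invariance of $d_{\rm tr}$. Concretely, I will show that $\pi_{\mathcal{U}_m}(x)$ is the coordinatewise largest element of $\Tn$ dominated by $x$, that it actually touches $x$ in at least one coordinate, and that any competitor $y\in\Tn$ can be translated at no cost in $d_{\rm tr}$ to lie below $x$ --- at which point maximality forces the inequality.

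First I would prove two coordinatewise facts directly from $\pi(x)=\bigoplus_{v\in\eTn}\lambda_v\odot v$. Since every extreme clade ultrametric has entries in $\{-\infty,0\}$, the choice $\lambda_v=\min_{j:v_j=0}x_j$ gives $(\lambda_v\odot v)_j\leq x_j$ for every $j$, and taking the $\oplus$ yields $\pi(x)\leq x$ coordinatewise. Conversely, if $y\in\Tn$ satisfies $y\leq x$ coordinatewise, then by Proposition~\ref{pro:generatingset} we may write $y=\bigoplus_v c_v\odot v$; the constraint $c_v+v_j\leq x_j$ at indices with $v_j=0$ forces $c_v\leq\min_{j:v_j=0}x_j=\lambda_v$, and hence $y_j=\max_{v:v_j=0}c_v\leq\pi(x)_j$.

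Second I would verify that $\pi(x)$ meets $x$ in at least one coordinate, so that $\min_j(x_j-\pi(x)_j)=0$. Pick $j^\ast=(i_1,i_2)$ realizing $\min_j x_j$, and choose any proper subset $\sigma\subsetneq[m]$ with $|\sigma|\geq 2$ and $\{i_1,i_2\}\not\subseteq\sigma$ (such $\sigma$ exists for $m\geq 3$, e.g.\ $\sigma=\{i_1,k\}$ for any $k\neq i_1,i_2$). Then $v:=D_\sigma\in\eTn$ has $v_{j^\ast}=0$, and $\lambda_v=\min_{k:v_k=0}x_k=x_{j^\ast}$ because $x_{j^\ast}$ is the global minimum and appears in the minimizing set; combined with $\pi(x)_{j^\ast}\leq x_{j^\ast}$ from the previous step, we get $\pi(x)_{j^\ast}=x_{j^\ast}$.

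Finally I would conclude. Fix $y\in\Tn$; if any coordinate of $y$ equals $-\infty$ then $d_{\rm tr}(x,y)=+\infty$ and the claim is trivial, so assume $y$ is finite. Let $c:=\min_j(x_j-y_j)$ and set $y':=y+c\mathbf{1}$; tropical scalar closure of $\Tn$ gives $y'\in\Tn$, and shift-invariance of $d_{\rm tr}$ gives $d_{\rm tr}(x,y)=d_{\rm tr}(x,y')$, while by construction $y'\leq x$ with equality at some index. The maximality step yields $y'\leq\pi(x)$, so
\[
d_{\rm tr}(x,y)=d_{\rm tr}(x,y')=\max_j(x_j-y'_j)\geq\max_j(x_j-\pi(x)_j)=d_{\rm tr}(x,\pi(x)),
\]
where the first and last equalities use the touching-coordinate facts for $y'$ and $\pi(x)$ respectively. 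The main obstacle I anticipate is the touching-coordinate lemma: the rest is formal max-plus manipulation, but exhibiting a generator $v\in\eTn$ with $v_{j^\ast}=0$ and exploiting the global minimality of $x_{j^\ast}$ is the only place where the combinatorial structure of $\eTn$ genuinely enters, and it deserves care when $m$ is small.
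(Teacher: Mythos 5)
Your argument is correct and complete. Note that the paper itself supplies no proof of this proposition --- it is quoted from the reference [YCT] --- so there is no in-paper argument to compare against; what you have written is a valid self-contained derivation. Your route is the standard residuation argument for projections onto max-plus spans: (i) $\pi_{\mathcal{U}_m}(x)\leq x$ coordinatewise because each $\lambda_v=\min_{j:v_j=0}x_j$ is the largest scalar keeping $\lambda_v\odot v$ below $x$; (ii) $\pi_{\mathcal{U}_m}(x)$ dominates every element of $\Tn$ lying below $x$, using the generating-set property of $\eTn$; (iii) $\pi_{\mathcal{U}_m}(x)$ attains $x$ in the coordinate realizing $\min_j x_j$, which you justify by exhibiting a clade $\sigma$ avoiding the minimizing pair (and your existence check for $m\geq 3$ is the right place to be careful --- note also that every $v\in\eTn$ has at least one zero coordinate since $\sigma\subsetneq[m]$, so each $\lambda_v$ is well defined); and (iv) any competitor $y$ can be normalized by $c=\min_j(x_j-y_j)$ at no cost in $d_{\rm tr}$, after which maximality gives $y'\leq\pi_{\mathcal{U}_m}(x)$ and hence $\max_j(x_j-y'_j)\geq\max_j\bigl(x_j-\pi_{\mathcal{U}_m}(x)_j\bigr)$. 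The one cosmetic point worth tightening is step (ii): the tropical combination $y=\bigoplus_v c_v\odot v$ is a representation of a class in $\RR^e/\RR\mathbf{1}$, so you should fix the particular representative of $y$ satisfying $y\leq x$ and absorb the common shift into the $c_v$ before reading off $c_v\leq\lambda_v$; as written this is implicit but harmless. Overall the proof is sound and is essentially the argument one would expect the cited source to give.
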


The following proposition is a key for the {\em projected gradient methods} which we proposed in Section \ref{sec:tropPCA}.
\begin{proposition}[\cite{YCT}]\label{prop:nonExp}
    The projection map $\pi_{\mathcal{U}_m}(x)$ is non-expansive in terms of $d_{\rm tr}$, i.e., 
    \[
    d_{\rm tr}(\pi_{\mathcal{U}_m}(u), \pi_{\mathcal{U}_m}(v)) \leq d_{\rm tr}(u, v)
    \]
    for all $u, v \in \RR^e/\RR {\bf 1}$.
\end{proposition}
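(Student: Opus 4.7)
The plan is to work out $\pi_{\mathcal{U}_m}(x)$ coordinate by coordinate as an explicit tropical (max-plus) expression in $x$, and then to show that each coordinate-wise difference $\pi_{\mathcal{U}_m}(u)_k - \pi_{\mathcal{U}_m}(v)_k$ is pinched between $\min_j(u_j - v_j)$ and $\max_j(u_j - v_j)$; the non-expansiveness then drops out immediately from the definition of $d_{\rm tr}$.

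Concretely, for $w \in \mathcal{U}_m^{\infty}$ set $\mu_w(x) := \min_{j: w_j = 0} x_j$, so that Definition \ref{defn:tropicalProjectionMap} reads
\[
\pi_{\mathcal{U}_m}(x)_k \;=\; \max_{w \in \mathcal{U}_m^{\infty}} \bigl\{ \mu_w(x) + w_k \bigr\}.
\]
First I would record the elementary fact that $\min$ and $\max$ are $1$-Lipschitz in the sup-norm over a common finite index set: for real tuples $(a_\alpha), (b_\alpha)$,
\[
\min_\alpha(a_\alpha - b_\alpha) \;\leq\; \min_\alpha a_\alpha - \min_\alpha b_\alpha \;\leq\; \max_\alpha(a_\alpha - b_\alpha),
\]
and likewise for $\max_\alpha a_\alpha - \max_\alpha b_\alpha$. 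Applying the first to the index set $\{j : w_j = 0\}$ yields, for every generator $w$,
\[
m \;:=\; \min_j (u_j - v_j) \;\leq\; \mu_w(u) - \mu_w(v) \;\leq\; \max_j (u_j - v_j) \;=:\; M.
\]
The indexing quantifier widens because $\{j : w_j = 0\}\subseteq [e]$, so the bounds relax to the global $m$ and $M$ and become uniform in $w$. Note $M - m = d_{\rm tr}(u,v)$.

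Next I would transport these uniform bounds through the outer maximum. Adding $w_k$ to each side and taking $\max_w$ (using that $\max$ commutes with the addition of a constant, and is monotone), I obtain
\[
\pi_{\mathcal{U}_m}(v)_k + m \;\leq\; \pi_{\mathcal{U}_m}(u)_k \;\leq\; \pi_{\mathcal{U}_m}(v)_k + M
\]
for every $k \in [e]$; since $\pi_{\mathcal{U}_m}(x) \in \mathcal{U}_m \subseteq \mathbb{R}^e/\mathbb{R}\mathbf{1}$ is finite-valued, the arithmetic is well-defined (the $-\infty$ entries from the extreme clade ultrametrics do not contribute to the outer $\max$ for any $k$, since each coordinate of a point in $\mathcal{U}_m$ is realized). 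Hence $m \leq \pi_{\mathcal{U}_m}(u)_k - \pi_{\mathcal{U}_m}(v)_k \leq M$ for all $k$, and taking $\max_k$ minus $\min_k$ gives $d_{\rm tr}(\pi_{\mathcal{U}_m}(u), \pi_{\mathcal{U}_m}(v)) \leq M - m = d_{\rm tr}(u,v)$.

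The steps are essentially bookkeeping; the only mild obstacle is the presence of $-\infty$ coordinates in the generators $w \in \mathcal{U}_m^{\infty}$, which requires justifying that the inner $\min$ is over a nonempty index set $\{j : w_j = 0\}$ (true since each extreme clade ultrametric has a nonempty complementary set of leaves) and that the outer $\max$ attains finite values on each coordinate (guaranteed because $\pi_{\mathcal{U}_m}(x)$ lies in $\mathcal{U}_m$, whose points are ordinary ultrametrics). Once this is verified, the inequality follows with no tropical-geometry input beyond the explicit formula from Definition \ref{defn:tropicalProjectionMap} and the $1$-Lipschitz property of $\min$ and $\max$.
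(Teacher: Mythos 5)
Your argument is correct. A point of context first: the paper does not prove Proposition \ref{prop:nonExp} at all --- it is imported verbatim from \cite{YCT} and used as a black box (notably in the proof of Theorem \ref{tm:subgradient}) --- so there is no in-paper proof to compare against. Judged on its own, your proof is a clean, self-contained, elementary derivation: writing $\pi_{\mathcal{U}_m}(x)_k = \max_{w}\{\mu_w(x)+w_k\}$ with $\mu_w(x)=\min_{j:w_j=0}x_j$, using the $1$-Lipschitz sandwich $\min_\alpha(a_\alpha-b_\alpha)\le \min_\alpha a_\alpha-\min_\alpha b_\alpha\le\max_\alpha(a_\alpha-b_\alpha)$ uniformly over the generators, and pushing the resulting bounds $m\le \mu_w(u)-\mu_w(v)\le M$ through the outer max is exactly the right mechanism, and the conclusion $m\le\pi_{\mathcal{U}_m}(u)_k-\pi_{\mathcal{U}_m}(v)_k\le M$ for all $k$ immediately gives $d_{\rm tr}(\pi_{\mathcal{U}_m}(u),\pi_{\mathcal{U}_m}(v))\le M-m=d_{\rm tr}(u,v)$. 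You also correctly flag and dispose of the only delicate points: that $\{j:w_j=0\}$ is nonempty for each extreme clade ultrametric (since each clade $\sigma$ is a proper subset of $[m]$), and that for each coordinate $k$ some generator has $w_k=0$, so the outer maximum is finite. This is in fact a slightly stronger statement than non-expansiveness alone (it shows the coordinate differences of the projections are pinched between those of the inputs), and it requires nothing beyond the explicit formula in Definition \ref{defn:tropicalProjectionMap}.
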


\begin{remark}
    The projection map $\pi_{\mathcal{U}_m}$ is equivalent to the single linkage hierarchical clustering method \cite{YCT}.  Therefore, in the computational experiment described in Section \ref{sec:experiment}, we use a single linkage hierarchical clustering method to projecting a subgradient.  
\end{remark}

\section{Tropical Principal Component Analysis}\label{sec:tropPCA}

Yoshida et al.~defined a notion of {\em tropical principal component analysis (PCA)}, an analysis using the best fit tropical hyperplane or tropical polytope \cite{YZZ}.  Especially they applied tropical PCA with tropical polytopes to a sample of ultrametrics over the space of ultrametrics.  
In this section we consider $\Tn \subset \mathbb{R}^e/\mathbb{R}{\bf 1}$ where $m$ is the number of leaves and $e = \binom{m}{2}$.  Suppose we have a sample $\mathcal{S}:=\{u_1, \ldots u_n\} \subset \Tn$.
\begin{definition}[Definition 3.1 in \cite{10.1093/bioinformatics/btaa564}]
    The $s$-th order tropical principal polytope $\mathcal{P}$ whose minimizes 
    \[
    \sum_{i=1}^n d_{\rm tr}(u_i, w_i)
    \]
    where $w_i$ is the projection onto $\mathcal{P}$, that is 
    \[
    d_{\rm tr}(u_i, w_i) \leq d_{\rm tr}(u_i, x)
    \]
    for all $x \in \mathcal{P}$ for 
    $\mathcal{S}:\{u_1, \ldots u_n\}$ is called the {\rm $(s-1)$th-order tropical principal component polytope} of the sample $\mathcal{S}$.  $s$ many vertices of the tropical principal component polytope $\mathcal{P}$ is called {\em $(s-1)$th-order tropical principal components} or we call the {\rm best-fit tropical polytope with $s$ vertices}.
\end{definition}

\begin{remark}
    The $0$-th tropical principal polytope is a {\em tropical Fermat-Weber point} of a sample $\mathcal{S}$ with respect to $d_{\rm tr}$.  A tropical Fermat Weber point of $\mathcal{S}$ with respect to $d_{\rm tr}$ is defined as 
    \[
    x^* := \argmin_{x \in \mathbb{R}^e/\mathbb{R}{\bf 1}} \sum_{i=1}^n d_{\rm tr}(x, u_i).
    \]
    A tropical Fermat Weber point of $\mathcal{S}$ with respect to $d_{\rm tr}$ is not unique and a set of tropical Fermat Weber points forms a classical polytope \cite{Lin2016TropicalFP} and tropical polytope \cite{BSYM}. 
\end{remark}

In this paper, we focus on the $(s-1)$-th order principal components over $\Tn \subset \RR^e/\RR{\bf 1}$ for $s > 1$.  Our problem can be written as follows:
\begin{problem}\label{optimization}
We seek a solution for the following optimization problem:
\[
\min_{D^{(1)}, \ldots , D^{(s)} \in \Tn} \sum_{i = 1}^n  d_{\rm
  tr}(u_i, w_i)
\]
where
\begin{equation}\label{const1:convex}
w_i= \lambda_1^i \odot  D^{(1)} \,\oplus \,
\ldots \,\oplus \,
\lambda_s^i \odot  D^{(s)}  ,
\quad {\rm where} \,\, \lambda_k^i = {\rm min}(u_i-D^{(k)}) ,
\end{equation}
and 
\begin{equation}\label{const2:convex}
d_{\rm
  tr}(u_i, w_i) = \max\{|u_i(k) - w_i(k) - u_i(l) + w_i(l)|: 1 \leq
k < l \leq e\}
\end{equation}
with 
\begin{equation}\label{const3:convex}
u_i = (u_i(1), \ldots , u_i(e)) \text{ and } w_i = (w_i(1), \ldots , w_i(e)).
\end{equation}
\end{problem}

\begin{remark}[Proposition 4.2 in \cite{YZZ}]
    Problem \ref{optimization} can be formulated as a mixed integer programming problem.  
\end{remark}

In this section we consider subgradients of Problem \ref{optimization}. Here we are interested in computing
\[
\frac{\partial d_{\rm tr}(u_i, w_i)}{\partial D^{(k)}}.
\]
First, we notice that
\begin{eqnarray*}
    \frac{\partial d_{\rm tr}(u_i, w_i)}{\partial D^{(k)}} &=& \frac{\partial d_{\rm tr}(u_i, w_i)}{\partial w_i} \frac{\partial w_i}{\partial D^{(k)}}
\end{eqnarray*}
by the product rule.  

Let 
\[
\delta_{ij} = \begin{cases}
    1 & \mbox{if } i = j\\
    0 & \mbox{otherwise.}
\end{cases}
\]
be the Kronecker's Delta.  Then we have the following lemma:
\begin{lemma}[Lemma 10 in \cite{BSYM}]
For any two points $x,p\in \mathbb R^e \!/\mathbb R {\bf 1}$, the gradient at $x$ of the tropical distance between $x$ and $p$ is given by
    \begin{equation}\label{eq:partial1}
        \frac{\partial d_{\rm tr}(p, x)}{\partial x(l')} = \left(\delta_{l't} - \delta_{l't'}\mid x\in {S}^{\max, t}_{-p} \cap {S}^{\min, t'}_{-p}\right).
    \end{equation}
if there are no ties in $(x-p)$, implying that the min- and max-sectors are uniquely identifiable, that is, the point $x$ is inside of open sectors and not on the boundary of $H_{-p}$.
\end{lemma}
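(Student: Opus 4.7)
The plan is to unpack the definition of tropical distance, translate the sector-membership hypothesis into a uniqueness statement about where the maximum and minimum of $x-p$ are attained, and then differentiate the resulting piecewise-linear expression coordinate by coordinate. The only subtlety is that openness of the sectors must be invoked to upgrade a pointwise identity to a full neighborhood statement, which is what guarantees that $d_{\rm tr}(p,\cdot)$ is genuinely differentiable at $x$ rather than merely subdifferentiable.

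First I would set $y_i := x_i - p_i$, so that by the definition of the tropical metric in \eqref{eq:tropdist},
\[
d_{\rm tr}(p,x) \;=\; \max_{i\in[e]} y_i \;-\; \min_{i\in[e]} y_i.
\]
By Definition~\ref{def:sector}, the hypothesis $x\in S^{\max,t}_{-p}$ is literally the strict inequality $x_t - p_t > x_j - p_j$ for every $j\neq t$, so the maximum of $y$ is uniquely attained at index $t$. Symmetrically, by Definition~\ref{def:min_sector}, $x\in S^{\min,t'}_{-p}$ forces the minimum to be uniquely attained at index $t'$. At the point $x$ we therefore have the closed-form expression $d_{\rm tr}(p,x) = (x_t - p_t) - (x_{t'} - p_{t'})$. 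Next, since both sectors are open, their intersection $U := S^{\max,t}_{-p}\cap S^{\min,t'}_{-p}$ is an open neighborhood of $x$ on which the same indices $t$ and $t'$ continue to achieve the extrema of $\tilde x - p$. Hence on $U$ the tropical distance coincides with the smooth linear function $g(\tilde x) := (\tilde x_t - p_t) - (\tilde x_{t'} - p_{t'})$, and differentiating $g$ with respect to $\tilde x_{l'}$ yields $\partial g/\partial \tilde x_{l'} = \delta_{l't} - \delta_{l't'}$, exactly the asserted formula.

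There is no genuine obstacle beyond unpacking the definitions, but two bookkeeping points deserve mention. First, the formula must descend to the quotient $\mathbb{R}^e/\mathbb{R}{\bf 1}$: adding a constant to every coordinate of $x$ or of $p$ leaves each $y_i$ unchanged, so both the distance and the value $\delta_{l't}-\delta_{l't'}$ are invariant and the statement is well posed on the tropical projective torus. Second, the hypothesis ruling out ties in $x-p$ is essential precisely because on the boundary of the sectors the arg-max or arg-min fails to be a singleton; in that case the gradient must be replaced by an element of the Clarke subdifferential, namely the convex hull of the vectors $\delta_{l't}-\delta_{l't'}$ taken over all pairs $(t,t')$ that simultaneously attain the two extrema. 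This boundary case is precisely what is excluded by the hypothesis, and it is also what justifies the subgradient language used later in Theorem~\ref{tm:subgradient}.
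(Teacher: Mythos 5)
Your proof is correct. The paper itself gives no proof of this lemma---it is imported verbatim as Lemma~10 of \cite{BSYM}---but your argument (translate sector membership into unique attainment of the max and min of $x-p$, observe that the open sector intersection is a neighborhood on which $d_{\rm tr}(p,\cdot)$ agrees with the linear function $(\tilde x_t - p_t)-(\tilde x_{t'}-p_{t'})$, and differentiate) is exactly the standard derivation, and your remarks on well-posedness on $\mathbb{R}^e/\mathbb{R}\mathbf{1}$ and on the Clarke subdifferential at sector boundaries correctly anticipate how the result is used in Lemma~\ref{lm:subgradient} and Theorem~\ref{tm:subgradient}.
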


Also we notice that
\begin{eqnarray*}\label{eq:proj}
    w_i(l) 
    &=& \max\left[\left(\min_j(u_i(j) - D^{(1)}(j)){\bf 1} + D^{(1)}\right)(l), \ldots , \left(\min_j(u_i(j) - D^{(s)}(j)){\bf 1} + D^{(s)} \right)(l)\right],
\end{eqnarray*}
for $l = 1, \ldots , e$.
\begin{lemma}
\begin{equation}\label{eq:partial2}
      \frac{\partial w_i(l')}{\partial D^{(k)}(l)} = \begin{cases}
        -1 & \mbox{if } \argmax_{k'}\left\{\left(D^{(k')} + \min_j(u_i(j) - D^{(k')}(j)){\bf 1}\right)(l')\right\} = k,\\
         &  \min_j(u_i(j) - D^{(k)}(j)) = l,\mbox{ and } l' \not = l,\\
        1 & \mbox{if } \argmax_{k'}\left\{\left(D^{(k')} + \min_j(u_i(j) - D^{(k')}(j)){\bf 1}\right)(l')\right\} = k,\\
         & \min_j(u_i(j) - D^{(k)}(j)) \not = l,\mbox{ and } l' = l,\\
         0 &\mbox{otherwise,}
    \end{cases}  
\end{equation}
    for $i = 1, \ldots , n$, $k = 1, \ldots , s$, and for $l = 1, \ldots , e$.
\end{lemma}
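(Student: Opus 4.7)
The plan is to differentiate
\[
w_i(l') = \max_{k' = 1, \ldots, s} \Bigl[D^{(k')}(l') + \min_{j}\bigl(u_i(j) - D^{(k')}(j)\bigr)\Bigr]
\]
directly, using the elementary fact that the (sub)gradient of a $\max$ (respectively $\min$) over a finite index set is supported on the argmax (respectively argmin) summand. For a fixed sample index $i$ and output coordinate $l'$, let
\[
k^* \in \argmax_{k'} \Bigl[D^{(k')}(l') + \min_{j}\bigl(u_i(j) - D^{(k')}(j)\bigr)\Bigr], \qquad j^* \in \argmin_{j}\bigl(u_i(j) - D^{(k^*)}(j)\bigr),
\]
and work at a point where both are unique, so that $w_i(l')$ is smooth as a function of the $D^{(k)}(l)$'s. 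Then only the $k^*$-th summand in the outer max contributes to the derivative, and within that summand only the $j^*$-th entry contributes to the inner min.

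First I would dispose of the case $k \ne k^*$: the variable $D^{(k)}(l)$ enters $w_i(l')$ only through non-maximizing summands, so $\partial w_i(l')/\partial D^{(k)}(l) = 0$, matching the ``otherwise'' branch of \eqref{eq:partial2}. For $k = k^*$, substituting the active indices simplifies the expression to
\[
w_i(l') = D^{(k)}(l') + u_i(j^*) - D^{(k)}(j^*).
\]
Here $D^{(k)}(l)$ appears in at most two places: it contributes $+1$ exactly when $l = l'$ (through the explicit $D^{(k)}(l')$) and $-1$ exactly when $l = j^*$ (through $-D^{(k)}(j^*)$). Summing the two contributions yields $-1$ when $l = j^* \ne l'$, $+1$ when $l = l' \ne j^*$, and $0$ either when $l \notin \{l', j^*\}$ or when $l = l' = j^*$ (in which case the $+1$ and $-1$ cancel). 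These three outcomes match the three branches of \eqref{eq:partial2} exactly.

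The only real subtlety, rather than a serious obstacle, is the non-smoothness of $w_i(l')$ at configurations where the outer $\argmax$ or the inner $\argmin$ fails to be unique; at such configurations \eqref{eq:partial2} should be understood as a particular \emph{subgradient} selected by fixing one $k^*$ and one $j^*$, exactly in the spirit of how \eqref{eq:partial1} is stated in the preceding lemma for the tropical distance. Composing this lemma with \eqref{eq:partial1} via the product rule displayed earlier then produces the subgradient of $d_{\rm tr}(u_i, w_i)$ with respect to $D^{(k)}$ that drives the projected gradient descent described in the next section.
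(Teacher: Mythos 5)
Your proof is correct and is exactly the ``direct computation from the equation for $w_i(l')$'' that the paper gives as its entire justification; you simply carry out the computation the paper leaves implicit, identifying the active index $k^*$ of the outer max and $j^*$ of the inner min and reading off the contributions $+1$ at $l=l'$ and $-1$ at $l=j^*$ (with cancellation when $l'=j^*$). Your added remark that the formula is a selected subgradient at points where the argmax or argmin is not unique is a sound clarification consistent with how the paper treats the preceding gradient lemma.
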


\begin{proof}
    Direct computation from the equation in \eqref{eq:proj}.  
\end{proof}

\begin{lemma}\label{lm:subgradient}
    Subgradients of Problem \ref{optimization} over $\RR^e/\RR {\bf 1}$ is 
    \[
\sum_{i=1}^n \frac{\partial d_{\rm tr}(u_i, w_i)}{\partial D^{(k)}(l)} = \sum_{i=1}^n \frac{\partial d_{\rm tr}(u_i, w_i)}{\partial w_i(l')} \frac{\partial w_i(l')}{\partial D^{(k)}(l)}
    \]
    which can be obtained by equations in \eqref{eq:partial1} and \eqref{eq:partial2}.
\end{lemma}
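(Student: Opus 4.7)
The plan is to verify this as a direct application of the multivariable chain rule to the composition $D^{(k)} \mapsto w_i \mapsto d_{\rm tr}(u_i,w_i)$, combined with linearity of differentiation over the finite sum indexed by $i$. The two preceding lemmas have already supplied the factors needed: equation \eqref{eq:partial1} gives the outer partial $\partial d_{\rm tr}(u_i,w_i)/\partial w_i(l')$ via the sector decomposition of the generalized Hilbert projective metric, and equation \eqref{eq:partial2} gives the inner partial $\partial w_i(l')/\partial D^{(k)}(l)$ by direct differentiation of the tropical projection formula \eqref{eq:proj}. What remains is to justify composing these factors.

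First, I would restrict attention to a generic parameter $(D^{(1)},\ldots,D^{(s)})$ at which the outer $\argmax_{k'}$ in \eqref{eq:proj}, each inner $\argmin_j$ defining $\lambda_k^i$, and the max and min sectors of $H_{-u_i}$ containing $w_i$ are all uniquely attained. On a small neighborhood of such a point, $w_i(l')$ coincides with a single affine function of the entries of $D^{(k)}$, and $d_{\rm tr}(u_i,w_i)$ coincides with a single signed difference of the form $(w_i(t)-u_i(t))-(w_i(t')-u_i(t'))$. Both maps are therefore classically differentiable, and the ordinary chain rule yields
\[
\frac{\partial d_{\rm tr}(u_i,w_i)}{\partial D^{(k)}(l)} \;=\; \sum_{l'=1}^e \frac{\partial d_{\rm tr}(u_i,w_i)}{\partial w_i(l')}\,\frac{\partial w_i(l')}{\partial D^{(k)}(l)}.
\]
Because the outer factor is nonzero only for $l' \in \{t,t'\}$ by \eqref{eq:partial1}, the sum collapses to the two-term expression displayed in the lemma. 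Adding these identities over $i = 1,\ldots,n$ and invoking additivity of the gradient across a finite sum produces the claimed formula for the objective of Problem \ref{optimization}.

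The main obstacle will be handling the non-generic parameter vectors where some $\argmax$, $\argmin$, or sector index is attained by multiple indices, so that neither $w_i$ nor $d_{\rm tr}(u_i,w_i)$ is classically differentiable. Since each summand is a composition of piecewise linear functions it admits a nonempty Clarke subdifferential, and I would argue that any consistent tie-breaking choice in \eqref{eq:partial1} and \eqref{eq:partial2} produces a limit of classical gradients along smooth perturbations into a generic region, and hence selects an element of that subdifferential. Summing over $i$ then yields a subgradient of the total objective, which is precisely what is needed to drive the projected subgradient descent scheme developed in Section \ref{sec:tropPCA} via the non-expansive projection $\pi_{\mathcal{U}_m}$ of Proposition \ref{prop:nonExp}.
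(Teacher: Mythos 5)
Your proposal is correct and follows essentially the same route as the paper, which treats the lemma as an immediate consequence of the chain rule composing the two partials from \eqref{eq:partial1} and \eqref{eq:partial2} (the paper in fact offers no further argument beyond that one-line justification). Your additions — making the summation over $l'$ explicit before it collapses to two terms, and handling the non-generic tie cases via limits of gradients into the Clarke subdifferential — fill in details the paper leaves implicit, and are consistent with its intent.
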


\begin{theorem}\label{tm:subgradient}
    Subgradient of Problem \ref{optimization} over $\Tn$ is 
    \[
\pi_{\eTn}\left(\sum_{i=1}^n \frac{\partial d_{\rm tr}(u_i, w_i)}{\partial D^{(k)}(l)}\right) 
    \]
    where $\sum_{i=1}^n \frac{\partial d_{\rm tr}(u_i, w_i)}{\partial D^{(k)}(l)}$ is obtained in Lemma \ref{lm:subgradient}.
\end{theorem}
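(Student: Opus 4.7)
The plan is to treat the proof as a standard projected-subgradient argument tailored to the tropically-convex setting. Lemma \ref{lm:subgradient} already gives an ambient subgradient of the objective $F(D^{(1)},\ldots,D^{(s)})=\sum_{i=1}^n d_{\rm tr}(u_i,w_i)$ when the variables $D^{(k)}$ are allowed to range freely over $\RR^e/\RR{\bf 1}$. The role of Theorem \ref{tm:subgradient} is to correct that expression so that the descent direction stays compatible with the feasibility constraint $D^{(k)}\in\Tn$. The natural way to do this in a tropically convex ambient space is to apply $\pi_{\mathcal{U}_m}$, the tropical projection of Definition \ref{defn:tropicalProjectionMap}, and the claim is that the resulting vector has the subgradient property with respect to perturbations that remain inside $\Tn$.

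Concretely, I would first set $g^{(k)} := \sum_{i=1}^n \partial d_{\rm tr}(u_i,w_i)/\partial D^{(k)}$ as in Lemma \ref{lm:subgradient}, so that for any unconstrained perturbation $D^{(k)}\mapsto D^{(k)}+\eta$ we have the first-order inequality $F(D^{(k)}+\eta)\geq F(D^{(k)})+\langle g^{(k)},\eta\rangle_{\rm tr}+o(\|\eta\|_{\rm tr})$ in the tropical subgradient sense. Restricting $\eta$ to tangent directions of $\Tn$ at $D^{(k)}$, I would use Proposition \ref{pro:generatingset} together with Theorem \ref{ultrametrics} to describe such directions as tropical linear combinations of extreme clade ultrametrics in $\eTn$. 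Then the non-expansiveness of $\pi_{\mathcal{U}_m}$ (Proposition \ref{prop:nonExp}) is exactly what is needed: it guarantees that replacing $g^{(k)}$ by $\pi_{\mathcal{U}_m}(g^{(k)})$ cannot enlarge the tropical distance to any tangent direction in $\Tn$, so the subgradient inequality is preserved on the restricted domain.

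The main step, and the place where the argument requires care, is verifying that the projection operator interacts correctly with the $\max$/$\min$ structure that defines both $d_{\rm tr}$ and the projection $w_i$ onto $\tconv(D^{(1)},\ldots,D^{(s)})$. My plan is to combine the explicit sectorial description in \eqref{eq:partial1} with the generator form in \eqref{eq:partial2} to write $g^{(k)}$ as a signed combination of coordinate vectors, and then to use the linearity of $\pi_{\mathcal{U}_m}$ on each max-sector (it is piecewise tropically linear, by its equivalence with single linkage clustering) to commute the projection past the summation over $i$. This piecewise linearity is the technical obstacle: one must check that the selection of active sectors does not change under the projection for generic $D^{(k)}$, which is the standard non-degeneracy condition already implicit in Lemma \ref{lm:subgradient}.

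Assuming no ties in $(x-p)$ as in the cited hypothesis of Lemma 10 of \cite{BSYM}, the conclusion then follows by combining the ambient subgradient inequality with the non-expansiveness bound, yielding
\begin{equation*}
F(D^{(k)}+\eta)\geq F(D^{(k)})+\langle \pi_{\mathcal{U}_m}(g^{(k)}),\eta\rangle_{\rm tr}+o(\|\eta\|_{\rm tr})
\end{equation*}
for every $\eta$ tangent to $\Tn$ at $D^{(k)}$, which is exactly the statement that $\pi_{\mathcal{U}_m}(g^{(k)})$ is a subgradient of $F$ over $\Tn$. The remark following Proposition \ref{prop:nonExp} identifying $\pi_{\mathcal{U}_m}$ with single linkage hierarchical clustering then makes the projected subgradient implementable for the experiments in Section \ref{sec:experiment}.
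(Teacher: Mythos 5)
Your overall plan --- take the ambient subgradient from Lemma \ref{lm:subgradient} and argue that applying $\pi_{\mathcal{U}_m}$ preserves its subgradient property on the constrained domain --- is in the same spirit as the paper, and both arguments lean on Proposition \ref{prop:nonExp}. But the way you invoke non-expansiveness does not do the work you assign to it. Proposition \ref{prop:nonExp} is a statement about the metric: $d_{\rm tr}(\pi_{\mathcal{U}_m}(u), \pi_{\mathcal{U}_m}(v)) \leq d_{\rm tr}(u,v)$. Your key step asserts that this ``guarantees that replacing $g^{(k)}$ by $\pi_{\mathcal{U}_m}(g^{(k)})$ cannot enlarge the tropical distance to any tangent direction,'' and hence that the first-order inequality $F(D^{(k)}+\eta)\geq F(D^{(k)})+\langle \pi_{\mathcal{U}_m}(g^{(k)}),\eta\rangle_{\rm tr}+o(\cdot)$ survives the projection. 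A bound on distances does not control a pairing $\langle\cdot,\cdot\rangle_{\rm tr}$ with direction vectors --- no such tropical inner product is defined in the paper, and in general projecting a gradient onto the feasible set (as opposed to onto a tangent cone at the current point) does not preserve a linearized lower bound. This is the genuine gap: you would need a separate lemma relating $\pi_{\mathcal{U}_m}(g^{(k)})$ to $g^{(k)}$ along directions inside $\Tn$, and neither Proposition \ref{pro:generatingset} nor the piecewise linearity of single linkage supplies it. Your own caveat that one must check ``the selection of active sectors does not change under the projection'' is exactly the unresolved point, and it is not implied by the non-degeneracy hypothesis of Lemma 10 of the cited reference.

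For contrast, the paper's proof deliberately avoids any first-order inequality. It uses non-expansiveness only where it genuinely applies: first with base point $0$, to conclude $d_{\rm tr}\bigl(0,\pi_{\eTn}(g^{(k)})\bigr) \leq d_{\rm tr}\bigl(0,g^{(k)}\bigr)$, so the projected quantity vanishes at a critical point; and second together with the fact that $\pi_{\eTn}$ fixes each $u_i \in \Tn$, to show that the monotone decrease of $\sum_i d_{\rm tr}(u_i,\cdot)$ along the subgradient step is not destroyed by composing with the projection. In other words, the paper establishes a criticality-preservation and descent property rather than the subgradient inequality you aim for. To salvage your route you would either have to define the pairing $\langle\cdot,\cdot\rangle_{\rm tr}$ and prove that $\pi_{\mathcal{U}_m}$ is compatible with it on tangent directions of $\Tn$, or retreat to the weaker statements the paper actually proves.
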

\begin{proof}
    By Proposition \ref{prop:nonExp}, we know that $\pi_{\eTn}$ is non-expanding in terms of $d_{\rm tr}$.  Therefore we have
    \[
    d_{\rm}\left(0, \pi_{\eTn}\left(\sum_{i=1}^n \frac{\partial d_{\rm tr}(u_i, w_i)}{\partial D^{(k)}(l)}(x) \right) \right) \leq d_{\rm tr}\left(0, \sum_{i=1}^n \frac{\partial d_{\rm tr}(u_i, w_i)}{\partial D^{(k)}(l)}(x)\right).
    \]
    Therefore, \[
d_{\rm}\left(0, \pi_{\eTn}\left(\sum_{i=1}^n \frac{\partial d_{\rm tr}(u_i, w_i)}{\partial D^{(k)}(l)}(x)\right) \right) = 0
    \]
    when $x$ is at a critical point.

    Suppose $x^* \in \Tn$ is an optimal solution for the Problem \ref{optimization}.  Then let 
    \[
    x^{t+1} := x^t - \alpha_t \sum_{i=1}^n \frac{\partial d_{\rm tr}(u_i, w_i)}{\partial D^{(k)}(l)}(x^t).
    \]
    Since 
    \[
    \sum_{i=1}^n \frac{\partial d_{\rm tr}(u_i, w_i)}{\partial D^{(k)}(l)}
    \]
    is subgradient by Lemma \ref{lm:subgradient}, we have
    \[
    \begin{array}{lcr}
         &&\sum_{i = 1}^n  d_{\rm tr}(u_i, \pi_{tconv(D^{(1)}, \ldots , D^{(k-1)}, x^{t+1}, D^{(k+1)}, \ldots , D^{(s)})}(u_i))\\ 
         &\leq& \sum_{i = 1}^n  d_{\rm tr}(u_i, \pi_{tconv(D^{(1)}, \ldots , D^{(k-1)}, x^{t}, D^{(k+1)}, \ldots , D^{(s)})}(u_i)),
    \end{array}
    \]
    for $k = 1, \ldots , s$.
    Since Proposition \ref{prop:nonExp}, we have
    \[
    \begin{array}{lcr}
         &&\sum_{i = 1}^n  d_{\rm tr}\left(\pi_{\eTn}(u_i), \pi_{\eTn}\left(\pi_{tconv(D^{(1)}, \ldots , D^{(k-1)}, x^{t+1}, D^{(k+1)}, \ldots , D^{(s)})}(u_i)\right)\right)\\ 
         &=& \sum_{i = 1}^n  d_{\rm tr}\left(u_i, \pi_{\eTn}\left(\pi_{tconv(D^{(1)}, \ldots , D^{(k-1)}, x^{t+1}, D^{(k+1)}, \ldots , D^{(s)})}(u_i)\right)\right)\\
         &\leq & \sum_{i = 1}^n  d_{\rm tr}(u_i, \pi_{tconv(D^{(1)}, \ldots , D^{(k-1)}, x^{t+1}, D^{(k+1)}, \ldots , D^{(s)})}(u_i))\\ 
         &\leq& \sum_{i = 1}^n  d_{\rm tr}(u_i, \pi_{tconv(D^{(1)}, \ldots , D^{(k-1)}, x^{t}, D^{(k+1)}, \ldots , D^{(s)})}(u_i)),
    \end{array}
    \]
    for $k = 1, \ldots , s$.
\end{proof}

\vspace{6pt} 





\section{Computational Experiments}\label{sec:experiment}

\subsection{Simulated Dataset}

In this experiments, we generate gene trees from the multispecies coalescent model (MCM) with a given species tree via the software {\tt Mesquite} \cite{mesquite}.  We fixed
the effective population size $N_e = 100,000$ and varied
$R = \frac{SD}{N_e}$
where $SD$ is the species depth which is the number of generations from the common ancestor (the root of the tree) to its leaves. 

In this experiment, we simulate $1000$ trees for each $R = 0.25, 0.5, 1.0, 2.0, 5.0, 10.0$ and we set iteration to be 100 with a MCMC method as well as our projected gradient method.  We recompute a MCMC technique and our projected gradient method 10 times for each $R$ so that we can capture variation of each sum of tropical distance between observations and their projections on an estimated tropical polytope (sum of "magnitude of errors" (SE) in terms of $d_{\rm tr}$), i.e., an estimated optimal value of the linear programming problem in Problem \ref{optimization}.   The results are shown in Figure \ref{fig:SSE}.  For the scheduling of the learning rate we start $\lambda = 0.001$.  Then each iteration we multiply the learning rate by $0.999$.  Note that this learning rate is not optimized.  Therefore, we could investigate what is the optimal learning rates.  We set the stopping criteria if the iteration reaches to the maximum iteration which we preset. Each iteration should take $O(m^2n)$, where $n$ is the sample size.

\begin{figure}
    \centering
    \includegraphics[width=\textwidth]{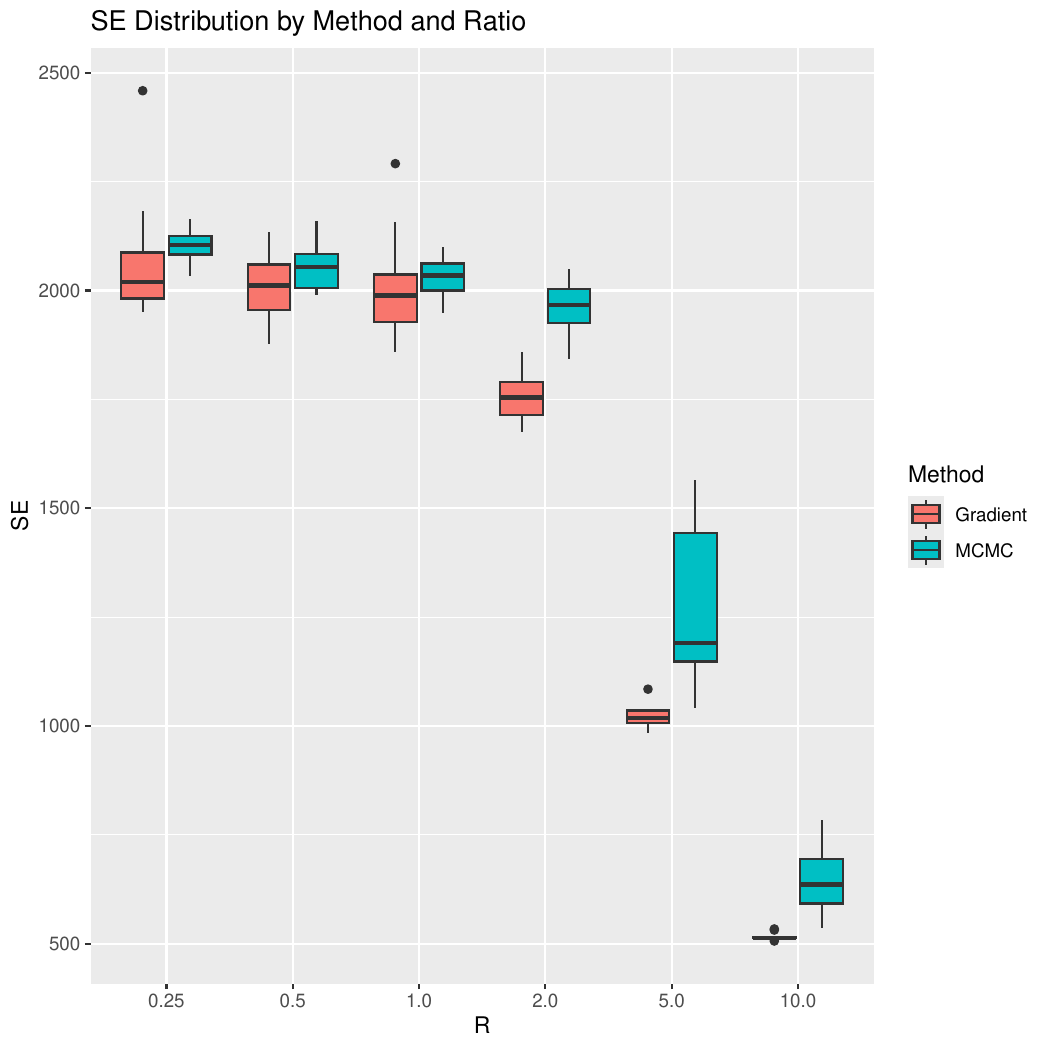}
    \caption{Side-by-Side Boxplots for SE for each method and each different ratio. We repeat computation 10 times for each ration and each method.}
    \label{fig:SSE}
\end{figure}

\subsection{Empirical Dataset}

We apply our projected gradient method for estimating the best-fit tropical polytope for the following empirical dataset from  268  orthologous
sequences  with  eight  species  of  protozoa  presented  in  \cite{kuo}.
This data set has  gene  trees  reconstructed  from  the  following
sequences: {\it Babesia  bovis} (Bb), {\it Cryptosporidium
  parvum} (Cp), {\it Eimeria tenella} (Et) [15], {\it
  Plasmodium falciparum} (Pf) [11], {\it Plasmodium  vivax} (Pv),
{\it Theileria  annulata} (Ta),  and {\it Toxoplasma
  gondii} (Tg).  An outgroup is a free-living ciliate, {\it
  Tetrahymena  thermophila} (Tt).
  
\begin{figure}
    \centering
    \includegraphics[width=0.4\textwidth]{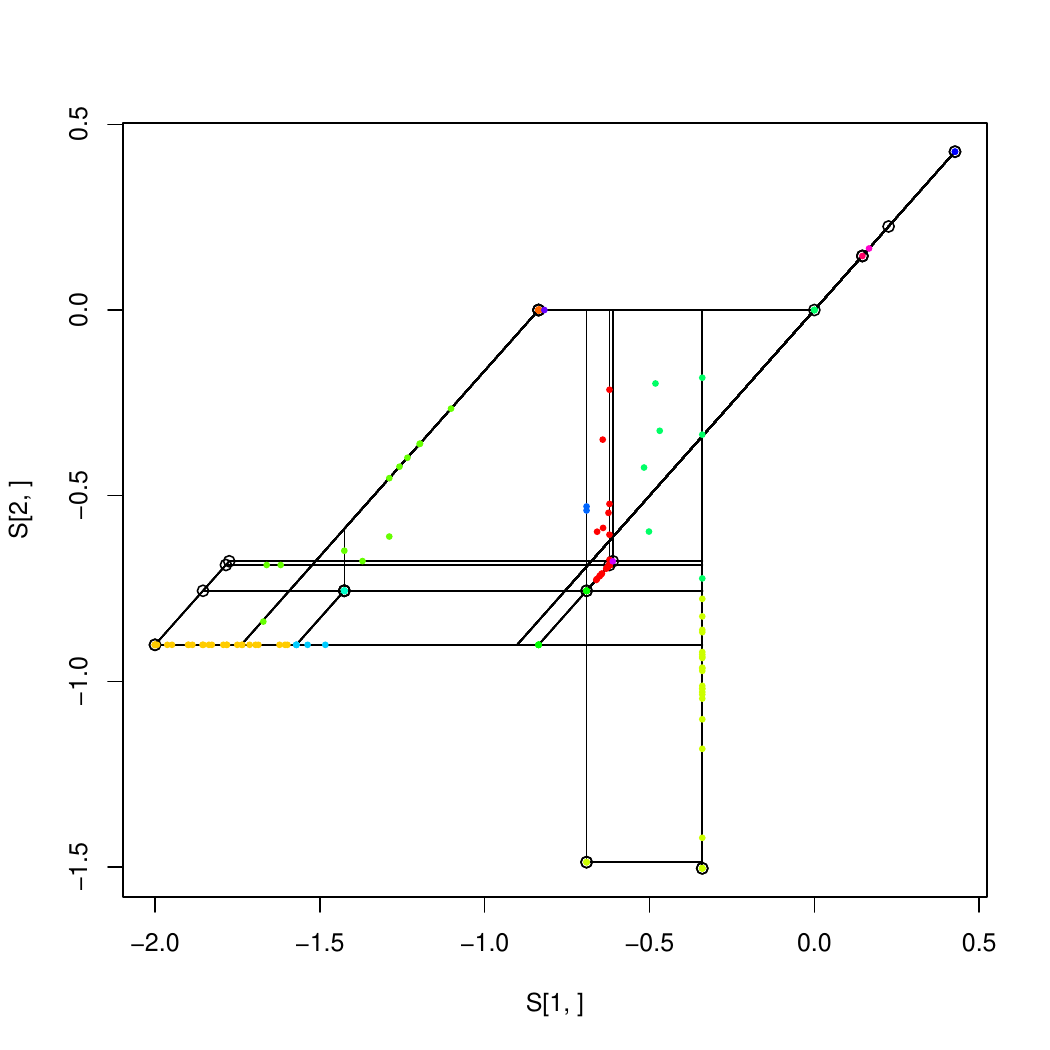}
    \includegraphics[width=0.5\textwidth]{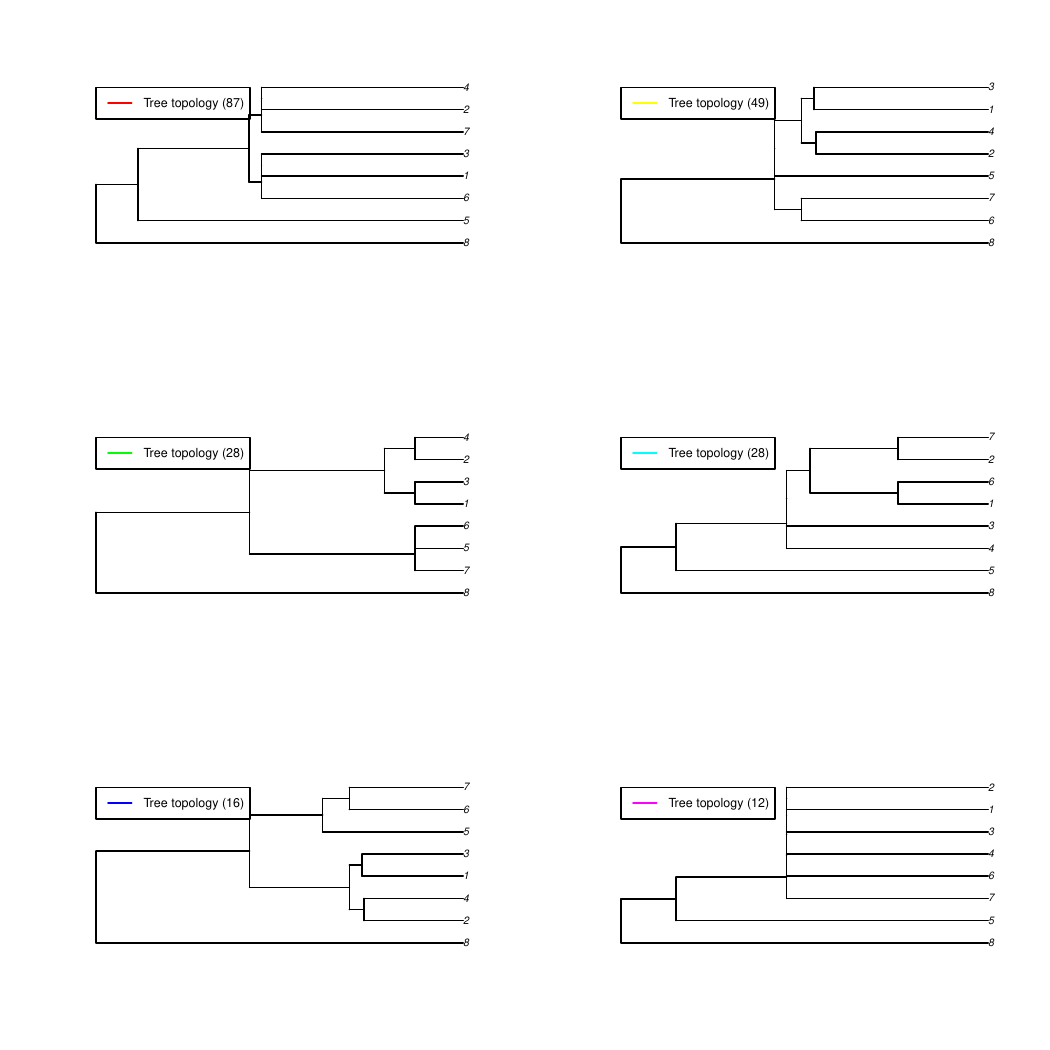}
    \caption{Left: Estimated second order tropical principal polytope. Right: Each color represents a tree topology. The number inside of each branket is the frequency of the tree topology.  1 presents ``Pv'', 2 represents ``Pf'', 3 represents ``Tg'', 4 represents ``Et'', 5 represents ``Cp'', 6 represents ``Ta'', 7 represents ``Bb'' and 8 represents ``Tt''.}
    \label{fig:PCA}
\end{figure}

In order to run this experiment, we use a Mac Pro laptop with Apple M4 Max and 128 GB memory. We implement our projected gradient method using {\tt R}.

We set the maximum iteration to be 100 for our projected gradient method and for a MCMC method, we set the maximum iterations to be 1000.  For the scheduling of the learning rate we start $\lambda = 0.01$.  Then each iteration we multiply the learning rate by $0.999$.  Note that this learning rate and scheduling are not optimized. 

Estimating the tropical principal polytope via the projected gradient descent on this dataset takes $6.82$ seconds and an estimated optimal value over the optimization problem in Problem \ref{optimization} is $360.8589$ while an estimated optimal value via the Markov Chain Monte Carlo (MCMC) sampler from the {\tt TML} package \cite{TML} is $397.6459$ with its computational time $54.70$ seconds with $1000$ iterations.

\section{Conclusion}

In this short paper, we introduce a novel method to approximate the best-fit tropical polytope to explain a sample of gene trees.  We show that this gradient method reduces the objective function with appropriate learning rate and it has a lower sum of tropical distances between observations and their projections on an estimated best-fit tropical polytope compared with the MCMC approach proposed by Page et al.~\cite{10.1093/bioinformatics/btaa564} from the computational experiment.   In an experiment, we implement a learning rate that decreases with each iteration, but it is not clear how we schedule learning rates for this problem.  

We implement our method in {\tt R} and the source code is available at \url{http://polytopes.net/Tropical_Gradient2.zip}.  


\bibliographystyle{plain} 
\bibliography{refs}

\end{document}